\documentclass[12pt]{amsart}
\usepackage{amssymb,verbatim,enumerate,ifthen}
\usepackage{cite}
\usepackage[mathscr]{eucal}
\usepackage[utf8]{inputenc}
\usepackage[T1]{fontenc}
\usepackage{esint}
\usepackage{marginnote}
\textwidth=16.5cm
\textheight=23.cm
\evensidemargin=-.5cm
\oddsidemargin=-.5cm

\newtheorem{thm}{Theorem}[section]

\newtheorem{lem}[thm]{Lemma}

\theoremstyle{definition}

%% Numbered objects of "non-theorem" style (text roman):

\theoremstyle{definition}

\newtheorem{exa}{Example}

%% An unnumbered remark:

\newtheorem*{xrem}{Remark}

%% Equations numbered by section:

\numberwithin{equation}{section}
\def\eq#1{{\rm(\ref{#1})}}
\def\Eq#1#2{\ifthenelse{\equal{#1}{*}}
  {\begin{equation*}\begin{aligned}[]#2\end{aligned}\end{equation*}}
  {\begin{equation}\begin{aligned}[]\label{#1}#2\end{aligned}\end{equation}}}

%%%%%%%%%%%%%%%%%%%%%%%%%%%%%%%%%%%
%%%%%%%%%%%%%%%%%%%%%%%%%%%%%%%%%%%

\def\A{\mathscr{A}}

\def\D{\mathscr{D}}

\def\G{\mathscr{G}}
\def\M{\mathscr{M}}
\def\Nm{\mathscr{N}}
\def\P{\mathscr{P}}
\newcommand\R{\mathbb{R}}
\newcommand\F{\mathbb{F}}
\newcommand\N{\mathbb{N}}
\newcommand\Z{\mathbb{Z}}

\newcommand\Q{\mathbb{Q}}
\def\LP#1{^{\underline{#1}}}
\def\UP#1{^{\overline{#1}}}
\newcommand\vone{{\bf 1}}

 \def\Pr{\mathscr{I}}       

\newcommand{\QA}[1]{\A^{[#1]}}

\newcommand{\abs}[1]{\left| #1 \right| }

\DeclareMathOperator{\supp}{supp}
\title
{From the Ingham--Jessen property to mixed-mean inequalities}

\author{Jacek Chudziak}
\address{Faculty of Mathematics and Natural Sciences, University of Rzesz\'ow, Pigonia 1, 35-310 Rzesz\'ow, Poland}
\email{chudziak@ur.edu.pl}

\author{Zsolt P\'ales}
\address{Institute of Mathematics, University of Debrecen, Egyetem tér 1, 4032 Debrecen, Hungary}
\email{pales@science.unideb.hu}

% second author
\author{Pawe{\l} Pasteczka}
\address{Institute of Mathematics, Pedagogical University of Krak\'ow,  Podchor\k{a}\.{z}ych 2, 30-084 Cracow, Poland}
\email{pawel.pasteczka@up.krakow.pl}

\keywords{Mean; Ingham--Jessen inequality; Kedlaya inequality; Mixed-mean inequality}

\subjclass[2010]{Primary: 26D15, 26A51, 05A19, 05A40; Secondary: 26E60, 05B20}

%\thanks{Corresponding author: Zsolt P\'ales}

\thanks{The research of the second author was supported by the EFOP-3.6.1-16-2016-00022 project. This project is co-financed by the European Union and the European Social Fund.}

\begin{document}
\begin{abstract}
For every symmetric mean $\mathscr{M} \colon \bigcup_{n=1}^\infty I^n \to I$ (where $I$ an interval) and a nonzero function $W \colon \{1,\dots,n\} \to \mathbb{N} \cup \{0\}$, define an $n$-variable mean by
$$\mathscr{M}_W(x):=\mathscr{M}\big(\underbrace{x_1,\dots,x_1}_{W(1)\text{-times}},\dots,\underbrace{x_n,\dots,x_n}_{W(n)\text{-times}}\big) \text{ for }x=(x_1,\dots,x_n) \in I^n.$$
Given two symmetric means $\mathscr{M},\,\mathscr{N} \colon \bigcup_{n=1}^\infty I^n \to I$ satisfying the so-called Ingham--Jessen inequality and some nonzero functions $F_1,\dots,F_k$, $G_1,\dots,G_l \colon \{1,\dots,n\} \to \mathbb{N} \cup \{0\}$, we establish sufficient conditions for  inequalities of the form $$\mathscr{N} \big( \mathscr{M}_{F_1}(x),\dots,\mathscr{M}_{F_k}(x)\big) \le \mathscr{M} \big( \mathscr{N}_{G_1}(x),\dots,\mathscr{N}_{G_l}(x)\big) \qquad(x \in I^n).$$
Our results provide a unified approach to the celebrated inequalities obtained by Kedlaya in 1994 and by Leng--Si--Zhu in 2004 and offer also new families of mixed-mean inequalities. 
\end{abstract}

\maketitle
\section{Introduction}

In this paper a function $\M \colon \bigcup_{n=1}^\infty I^n \to I$ will be called a \emph{mean} if, for all $n\in\N$ 
and $(x_1,\dots,x_n)\in I^n$ 
\Eq{*}{
\min(x_1,\dots,x_n) \le \M(x_1,\dots,x_n) \le \max(x_1,\dots,x_n). 
}

One of the most extensively studied branches of the theory of means is the extension and generalization of the 
important and classical inequalities involving means. Beyond the comparison theory of means, the so-called mixed-mean 
type inequalities are in the focus of many recent research papers. These type of inequalities involve two or more means 
which are substituted into each other.  

For example Kedlaya \cite{Ked94}, giving an affirmative answer to Holland's conjecture \cite{Hol92}, proved that, for 
all $n \in \N$ and $x_1,\dots,x_n>0$, the following inequality holds:
\Eq{*}{
\frac{x_1+\sqrt{x_1x_2}+\cdots+\sqrt[n]{x_1x_2\cdots x_n}}{n} 
\le \sqrt[n]{x_1 \cdot \frac{x_1+x_2}2 \cdots \frac{x_1+x_2+\cdots+x_n}n}.
}
Another result of this type was established by Leng--Si--Zhu \cite{LenSiZhu04} in 2004
\Eq{*}{
&\frac{\sqrt[n-1]{x_2x_3\cdots x_{n}}+\sqrt[n-1]{x_1x_3\cdots x_{n}}+\cdots+\sqrt[n-1]{x_1x_2\cdots x_{n-1}}}{n} \\
&\hspace{25mm}\le \sqrt[n]{\frac{x_2+x_3+\cdots+x_{n}}{n-1} \cdot \frac{x_1+x_3+\cdots+x_{n}}{n-1} \cdots 
\frac{x_1+x_2+\cdots+x_{n-1}}{n-1}}.
}
More general inequalities than this that involve power means (instead the arithmetic and geometric means) were obtained 
by Carlson--Meany--Nelson \cite{CarMeaNel71} in 1971.
%who proved that for $s \le t$ and a triple $(m,k,n)$ with $m+k>n$, the there holds
%\Eq{*}{
%\P_t \big( \P_s(x_{i_1},\dots,x_{i_m}) \mid 1 \le i_1<\dots<i_m \le n \big) \le %\P_s \big( \P_t(A) \mid A \subset 
%X,\,|A|=k \big),\P_s \big( \P_t(x_{i_1},\dots,x_{i_k}) \mid 1 \le i_1<\dots<i_k \le n \big),
%}
For further examples of inequalities of this type we refer the reader to the seminal monograph \cite{Bul03}. The typical tool to prove such inequalities is the Hölder inequality. In our paper we aim to derive the above, as well as some more general inequalities with the help of the so-called Ingham--Jessen inequality. For the setting of the arithmetic and geometric means this states that, for all $n,m \in \N$ and $x_{ij}>0$ ($i\in\N_n$, $j\in\N_m$), 
\Eq{*}{
&\frac{\sqrt[m]{x_{11}x_{12}\cdots x_{1m}}+\sqrt[m]{x_{21}x_{22}\cdots x_{2m}}+\cdots+\sqrt[m]{x_{n1}x_{n2}\cdots 
x_{nm}}}{n} \\
&\hspace{25mm}\le \sqrt[m]{\frac{x_{11}+x_{21}+\cdots+x_{n1}}{n} \cdot \frac{x_{12}+x_{22}+\cdots+x_{n2}}{n} \cdots 
\frac{x_{1m}+x_{2m}+\cdots+x_{nm}}{n}}.
}
Here and also in the sequel, we set $\N_0:=\N \cup \{0\}$ and, for every $k \in \N$, we denote $\N_k:=\{1,\dots,k\}:=[1,k]\cap\N$.
More generally, we say that a pair $(\M,\Nm)$ of means on $I$ \emph{forms an Ingham--Jessen pair} (briefly \emph{is an I--J pair}) if, for $x_{ij}\in I$ ($i\in\N_n$, $j\in\N_m$), the following inequality is satisfied:
\Eq{IJ}{
&\Nm \Big( \M(x_{11},x_{12},\dots,x_{1m}), \M(x_{21},x_{22},\dots,x_{2m}),\dots,\M(x_{n1},x_{n2},\dots,x_{nm}) \Big) \\
&\qquad\le \M \Big( \Nm(x_{11},x_{21},\dots,x_{n1}), \Nm(x_{12},x_{22},\dots,x_{n2}),\dots,\Nm(x_{1m},x_{2m},\dots,x_{nm}) \Big).
}
The Ingham--Jessen pairs were characterized for several families of means. In particular, a pair of power means $(\P_p,\P_q)$ is an  Ingham--Jessen pair if and only if $p \le q$ (see \cite{HarLitPol34}). In a more general setting, a pair of Gini means $(\G_{p,q},\G_{r,s})$ forms an Ingham--Jessen pair if and only if $pqrs=0$ and $\min(p,q) \le \min(r,s) \le \max(p,q) \le \max (r,s)$ (see \cite[Theorem 5]{Pal83a}). Extending these results, P\'ales also characterized the Ingham--Jessen property in the class of quasideviation means \cite{Pal85a}. 

In order to formulate our main statements, we need to introduce some further notations. Given a mean $\M$ on $I$, $n\in\N$, and a non-identically zero function $F \colon \N_n \to \N_0$, we define $\M_F \colon  I^n \to I$ by
\Eq{*}{
\M_F(x_1,\dots,x_n):=\M\big(\underbrace{x_1,\dots,x_1}_{F(1)\text{-times}},\dots,\underbrace{x_n,\dots,x_n}_{F(n)\text{-times}}\big) \qquad\text{for}\quad(x_1,\dots,x_n) \in I^n.
}
For $k,\,l,\,n\in \N$ and sequences $(F_i)_{i=1}^k$ and $(G_j)_{j=1}^l$ of non-identically zero functions mapping $\N_n$ into $\N_0$, and means $\M,\,\Nm$ on $I$, we will investigate the validity of the inequality 
\Eq{E:GIJ}{
 \Nm \big( \M_{F_1}(x),\dots,\M_{F_k}(x)\big) \le \M \big( \Nm_{G_1}(x),\dots,\Nm_{G_l}(x)\big) \qquad \text{ for }x \in I^n.
}
This inequality generalizes the comparison inequality of means $\M$ and $\Nm$. Indeed, for $k=l=1$ and $F_1=G_1\equiv 1$, \eq{E:GIJ} becomes $\M \le \Nm$; while for $k=l=n$ and $F_i=G_i=\textbf{1}_{\{i\}}$ for $i \in \N_n$, \eq{E:GIJ} reduces  to $\Nm \le \M$. Note that the Kedlaya inequality is a particular case of \eq{E:GIJ}, too. In fact, it is enough to take $\M$ and $\Nm$ to be the geometric and arithmetic mean on $(0,+\infty)$, respectively, $k=l=n$, and $F_i=G_i=\textbf{1}_{\{1,\dots,i\}}$ for $i \in \N_n$.

In fact, the Ingham--Jessen inequality \eq{IJ} is a particular case of \eq{E:GIJ}. Indeed, for $k,\,l \in\N$, $n=kl$, 
\Eq{IJ-Fi}{
F_i&=\textbf{1}_{\{(i-1)k+1,(i-1)k+2,\dots,ik\}} &\quad \text{ for }i \in \N_l
}
and
\Eq{IJ-Gj}{
G_j&=\textbf{1}_{\{j,k+j,\dots,(l-1)k+j\}} &\quad \text{ for }j \in \N_k,
}
inequality \eq{E:GIJ} takes the form 
\Eq{*}{
&\Nm \Big( \M(x_1,x_2,\dots,x_k), \M(x_{k+1},x_{k+2},\dots,x_{2k}),\dots,\M(x_{(l-1)k+1},x_{(l-1)k+2},\dots,x_{lk}) \Big) \\
&\qquad\le \M \Big( \Nm(x_1,x_{k+1},\dots,x_{(l-1)k+1}), \Nm(x_2,x_{k+2},\dots,x_{(l-1)k+2}),\dots,\Nm(x_k,x_{2k},\dots,x_{lk}) \Big).
}
The above inequality, with an obvious substitution, becomes equivalent to \eq{IJ}.

The notion of quasiarithmetic means was introduced in 1930s by Aumann, Knopp \cite{Kno28} and Jessen independently and then characterized by Kolmogorov, Nagumo and de Finetti \cite{Kol30,Nag30,Def31}. For a continuous strictly monotone function $ f \colon I \to \R$, we define
the quasiarithmetic mean $\QA{f}$ on $I$ by
\Eq{*}{
\QA{f}(x_1,\dots,x_n):=f^{-1} \Big( \frac{f(x_1)+\cdots+f(x_n)}{n} \Big).
}
Quasiarithmetic means naturally generalize power means. Indeed, whenever $I=\R_+$ and $f=\pi_p$, where $\pi_p(x):=x^p$ if $p\ne 0$ and $\pi_0(x):=\ln x$, then the mean $\QA{f}$ coincides 
with the $p$-th power mean (this is what was noticed by Knopp \cite{Kno28}). These means share most of the properties of 
power means. In particular, it is easy to verify that they are symmetric and repetition invariant. Let us recall here that a mean $\M$ on the interval $I$ is called \emph{symmetric} provided the $n$-variable restriction $M|_{I^n}$ is symmetric for every $n \in \N$; and it is said to be \emph{repetition invariant} if, for all $n,m\in\N$ and $(x_1,\dots,x_n)\in I^n$, the following identity is satisfied
\Eq{*}{
  \M(\underbrace{x_1,\dots,x_1}_{m\text{-times}},\dots,\underbrace{x_n,\dots,x_n}_{m\text{-times}})
   =\M(x_1,\dots,x_n).
}
(cf. \cite{PalPas16}). Quasiarithmetic means share even more properties of power means (cf.\ \cite{Acz48a}, \cite{Kol30}).

Generalizing the above idea, for a given mean $\M$ on $I$ and a homeomorphism $f \colon J \to I$, we define a mean $\M^{[f]}$ on $J$ by 
\Eq{*}{
\M^{[f]}(x_1,\dots,x_n):=f^{-1} \big( \M(f(x_1),\dots,f(x_n)) \big).
}
The mean $\M^{[f]}$ will be called the \emph{$f$-conjugate} of $\M$. In this terminology, quasiarithmetic means are just the conjugates of the arithmetic mean.

It can be observed that if $f$ is continuous and strictly increasing and $(\M,\Nm)$ is an Ingham--Jessen pair, then so is $(\M^{[f]},\Nm^{[f]})$. Additionally, the Ingham--Jessen property of the pairs $(\M,\A)$ and $(\A,\M)$ is equivalent to the convexity and to the concavity of $\M$, respectively. Using these facts, $(\M,\QA{f})$ is an Ingham--Jessen pair if and only if $\M^{[f^{-1}]}$ is convex. Similarly, the Ingham--Jessen property of $(\QA{f},\M)$ is equivalent to concavity of $\M^{[f^{-1}]}$. 
Finally, if $g \colon I \to \R$ is continuous and strictly monotone, then $(\QA{f},\QA{g})$ is an Ingham--Jessen pair if and only if $(\QA{g})^{[f^{-1}]}=\QA{g \circ f^{-1}}$ is concave. Fortunately, the concavity of quasiarithmetic means has been characterized recently in \cite{ChuGlaJarJar19} and in \cite{PalPas18a}. Namely, for a strictly monotone $\mathcal{C}^2$ function $f \colon I \to \R$, the quasiarithmetic mean $\QA{f}$ is concave if and only if either $f''$ is nowhere vanishing and $f'/f''$ is convex and negative, or $f''\equiv 0$.

\section{Conjugated families of probability distributions}

Let $\F$ be a subfield of $\R$.
For $n\in\N$, let $\Pi_n(\F)$ denote the set of all \emph{$\F$-valued probability distributions} ($\F$-p.d.\ for short) over the set $\N_n$, 
i.e., 
\Eq{*}{
  \Pi_n(\F):=\{(\pi_1,\dots,\pi_n)\mid \pi_1,\dots,\pi_n \in[0,1]\cap\F,\,\pi_1+\cdots+\pi_n=1\}.
}
The sequences $(P_1,\dots,P_k)\in\Pi_n(\R)^k$ and $(Q_1,\dots,Q_m)\in\Pi_n(\R)^m$ are said to be \emph{$\F$-conjugated} provided that there exists a matrix $(R_{i,j})\in\Pi_n(\F)^{k\times m}$ such that
\Eq{E:defconj}{
P_i = \frac1m \sum_{j=1}^m R_{i,j}\quad\text{ for all }i\in\N_k \qquad \mbox{and}\qquad
Q_j = \frac1k \sum_{i=1}^k R_{i,j}\quad\text{ for all }j\in\N_m.
}
In such a case, $(R_{i,j})$ is called a \emph{transition matrix} between $(P_1,\dots,P_k)$ and $(Q_1,\dots,Q_m)$. A sequence which is $\F$-conjugated to itself is called \emph{$\F$-selfconjugated}. Obviously, every element of $\Pi_n(\F)$ is $\F$-selfconjugated.

\begin{xrem}
Note that a necessary condition for two sequences $(P_1,\dots,P_k)$ and $(Q_1,\dots,Q_m)$ of $\R$-p.d.\ to be $\F$-conjugated is 
\Eq{*}{
  \frac1k \sum_{i=1}^k P_i=\frac1m \sum_{j=1}^m Q_j.
}
\end{xrem}

In the following examples we present some pairs of $\Q$-conjugated sequences of $\Q$-p.d.

\begin{exa}
For $k, l \in \N$, the sequences $(\frac1k F_i)_{i=1}^l$ and $(\frac1l G_j)_{j=1}^k$, where $F_i$ and $G_j$ are given by 
\eq{IJ-Fi} and \eq{IJ-Gj} are $\Q$-conjugated with the transition matrix $(\vone_{\{(i-1)k+j\}})_{(i,j) \in \N_k \times \N_l}$.
\end{exa}

\begin{exa}[Leng--Si--Zhu \cite{LenSiZhu04}]
\label{exa:LenSiZhu}
For $k,\,n \in \N$ with $\tfrac{n}{2}< k \le n$, the sequence consisting of the normalized characteristic functions of all $k$-elements subsets of $\N_n$ is $\Q$-selfconjugated. In the sequel, we will denote this sequence by $C_n^k$ (ordered lexicographically).
\end{exa}

Further nontrivial examples of the conjugated pairs will be discussed in the last section of this paper.

The following lemma will be useful for our considerations mainly for the case when $\F=\Q$.

\begin{lem}
\label{lem:Qs}
If two sequences of $\F$-p.d.-s are $\R$-conjugated then they are also $\F$-conjugated.
\end{lem}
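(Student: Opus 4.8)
The plan is to recast $\F$-conjugacy as the solvability, over the field $\F$, of a finite system of linear equalities and inequalities, and then to exploit the fact that such a system, once solvable over $\R$, is already solvable over any ordered subfield containing its coefficients. Writing $P_i=(P_{i,1},\dots,P_{i,n})$ and $Q_j=(Q_{j,1},\dots,Q_{j,n})$, and treating the entries $R_{i,j,s}$ (the $s$-th coordinate of $R_{i,j}\in\Pi_n$, for $i\in\N_k$, $j\in\N_m$, $s\in\N_n$) of a prospective transition matrix as $kmn$ unknowns, the conditions \eq{E:defconj} defining a transition matrix, read componentwise, together with the normalization of each $R_{i,j}$, become the linear equalities
\Eq{*}{
\tfrac1m\sum_{j=1}^m R_{i,j,s}=P_{i,s},\qquad \tfrac1k\sum_{i=1}^k R_{i,j,s}=Q_{j,s},\qquad \sum_{s=1}^n R_{i,j,s}=1,
}
supplemented by the nonnegativity constraints $R_{i,j,s}\ge0$. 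All coefficients here are rational, hence lie in $\F$, while the right-hand sides $P_{i,s},Q_{j,s}$ belong to $\F$ by hypothesis. The $\R$-conjugacy assumption says precisely that this system has a solution in $\R^{kmn}$, and $\F$-conjugacy is exactly the existence of a solution in $\F^{kmn}$.

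First I would observe that the feasible set $\mathcal P$ of this system is bounded: for each fixed pair $(i,j)$ the block $(R_{i,j,s})_{s=1}^n$ is nonnegative and sums to $1$, so every coordinate lies in $[0,1]$ and $\mathcal P\subseteq[0,1]^{kmn}$. By the $\R$-conjugacy hypothesis $\mathcal P$ is nonempty, and hence it is a nonempty bounded polyhedron presented in the standard form $\{R\ge0 : AR=b\}$, where the coefficient matrix $A$ has rational entries and the right-hand side $b$ has entries in $\F$.

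The decisive step is then to produce an $\F$-valued point of $\mathcal P$. For this I would invoke the classical description of the vertices of a polyhedron in standard form: a nonempty bounded polyhedron possesses at least one vertex, and every vertex is a basic feasible solution, obtained by selecting a maximal linearly independent set $B$ of columns of $A$, setting the remaining coordinates equal to $0$, and solving the nonsingular square system $A_B R_B=b$ for the basic coordinates. Since $A_B$ is invertible with entries in $\Q\subseteq\F$ and $b$ has entries in $\F$, Cramer's rule (equivalently, Gaussian elimination) yields $R_B=A_B^{-1}b\in\F^{|B|}$, while the non-basic coordinates equal $0\in\F$. Thus the selected vertex lies in $\F^{kmn}$, and reassembling its coordinates into a matrix $(R_{i,j})\in\Pi_n(\F)^{k\times m}$ produces a transition matrix with $\F$-valued entries satisfying \eq{E:defconj}, which is exactly the asserted $\F$-conjugacy.

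The \emph{main obstacle} is this last step, namely passing from the mere existence of a real solution to the existence of one with coordinates in $\F$. Everything hinges on the feasible region being a polytope defined over $\F$ whose vertices are forced to be $\F$-rational, each vertex being pinned down by a full-rank $\F$-linear subsystem. The boundedness observation is what guarantees that a vertex exists at all, ruling out the degenerate possibility of a feasible region that is an affine flat or an unbounded polyhedron with no extreme points, and the standard-form reformulation is what makes the basic-solution argument directly applicable; the rest is routine linear algebra over $\F$.
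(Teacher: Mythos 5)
Your proposal is correct and takes essentially the same route as the paper: both recast $\F$-conjugacy as a linear feasibility system with coefficients in $\F$ (rational coefficient matrix, right-hand sides from the given $\F$-valued distributions), select an extreme point of the nonempty compact feasible polytope, and deduce from Cramer's rule that its coordinates lie in $\F$. The only cosmetic difference is that the paper obtains the extreme point via the Minkowski--Krein--Milman theorem and pins it down by $kmn$ supporting hyperplanes, whereas you use the equivalent basic-feasible-solution description of vertices of a polyhedron in standard form.
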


\begin{proof}
Let $k,\,m,\,n\in\N$, and $(P_1,\dots,P_k)\in\Pi_n(\F)^k$ and $(Q_1,\dots,Q_m)\in\Pi_n(\F)^m$ that are $\R$-conjugated. Then there exists a matrix $(R_{i,j})_{(i,\,j) \in \N_k \times \N_m}$ of real-valued probability distributions such that \eq{E:defconj} is valid. 
Define $r_{i,j,t}:=R_{i,j}(t)$ for $(i,j,t) \in \N_k \times \N_m\times \N_n$.

Now consider the following system of linear equations and inequalities:
\Eq{eqs}{
x_{i,j,t} &\ge 0 &&\text{ for all }(i,j,t) \in \N_k \times \N_m\times \N_n; \\ 
\sum_{t \in \N_n} x_{i,j,t}&=1&& \text{ for all }(i,\,j) \in \N_k \times \N_m;\\
P_i(t)&= \frac1m \sum_{j=1}^m x_{i,j,t} && \text{ for all }(i,t)\in\N_k\times \N_n;\\
Q_j(t) &= \frac1k \sum_{i=1}^k x_{i,j,t} && \text{ for all }(j,t)\in\N_m\times \N_n.
}
By the conjugacy, $(x_{i,j,t}):=(r_{i,j,t})$ is a solution for the above system. On the other hand, the solution set $S$ of this system forms a compact convex polyhedron in the space $\R^{kmn}$. Therefore, $S$ has an extreme point $x^0$ due to the Minkowski(--Krejn--Milmann) Theorem. Then there exist
$kmn$ supporting hyperplanes to $S$ whose intersection is exactly $x^0$. Since the coefficients of these hyperplanes belong to $\F$, therefore the point $x^0$ is a unique solution of a system of linear equations with coefficients belonging to $\F$. Then Cramer's Rule implies that all the coordinates of $x^0$ are in $\F$. Now setting the matrix $R^0_{i,j}(t):=x^0_{i,j,t}$ for $(i,j,t) \in \N_k \times \N_m\times \N_n$, we get that $R^0$ is a transition matrix between the sequences $(P_i)_{i=1}^k$ and $(Q_j)_{j=1}^m$ whose entries are $\F$-p.d.
\end{proof}

Now we are going to formulate our main result. To this end, 
if $W\colon\N_n\to\N_0$ is not identically zero, then its associated $\Q$-valued probability distribution $\D_{W}$ over $\N_n$ is defined as the $n$-tuple
\Eq{*}{
  \D_{W}:=\bigg(\frac{W(1)}{W(1)+\cdots+W(n)},\dots,\frac{W(n)}{W(1)+\cdots+W(n)}\bigg).
}

Our main result is contained in the subsequent theorem.

\begin{thm}
\label{thm:main}
Assume that $(\M,\Nm)$ is an Ingham--Jessen pair of symmetric and repetition invariant means on $I$. Let $k,\,m,\,n\in\N$, $F_i,\,G_j \colon \N_n \to \N_0$ for $i \in \N_k$, $j \in \N_m$. If the sequences $\big(\D_{F_i}\big)_{i=1}^k,\,\big(\D_{G_j}\big)_{j=1}^m$ are $\R$-conjugated, then the inequality \eq{E:GIJ} is valid for every $n$-tuple $x \in I^n$.
\end{thm}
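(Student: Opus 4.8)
The plan is to realize both sides of \eq{E:GIJ} as the two sides of a single instance of the Ingham--Jessen inequality \eq{IJ}, applied to a suitably inflated matrix whose rows reproduce the means $\M_{F_i}(x)$ and whose columns reproduce the means $\Nm_{G_j}(x)$. The whole point will be to arrange the inflation so that, after one application of \eq{IJ}, the duplicated entries on both sides collapse under repetition invariance.

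First I would pass to a \emph{rational} transition matrix. Since each $\D_{F_i}$ and each $\D_{G_j}$ lies in $\Pi_n(\Q)$ and the two sequences are assumed $\R$-conjugated, Lemma~\ref{lem:Qs} yields a transition matrix $(R_{i,j})\in\Pi_n(\Q)^{k\times m}$ with $\D_{F_i}=\frac1m\sum_{j=1}^m R_{i,j}$ for $i\in\N_k$ and $\D_{G_j}=\frac1k\sum_{i=1}^k R_{i,j}$ for $j\in\N_m$. Choosing a common denominator $N\in\N$, I set $a_{i,j,t}:=N\,R_{i,j}(t)\in\N_0$, so that $\sum_{t=1}^n a_{i,j,t}=N$ for every $(i,j)$, and I let $\mu_{i,j}$ denote the multiset on $\{x_1,\dots,x_n\}$ in which $x_t$ occurs $a_{i,j,t}$ times.

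The key step is the construction of the inflated matrix. For each pair $(i,j)$ I would build an $N\times N$ block $B_{i,j}$ each of whose rows and each of whose columns is a rearrangement of $\mu_{i,j}$; a circulant (Latin-square) filling, obtained by listing $\mu_{i,j}$ as a length-$N$ sequence and cyclically shifting it, achieves this. Assembling these blocks gives a $(kN)\times(mN)$ matrix $Z$ with rows indexed by $(i,s)\in\N_k\times\N_N$ and columns by $(j,s')\in\N_m\times\N_N$. The multiset of row $(i,s)$ is then $\bigcup_{j=1}^m\mu_{i,j}$, in which $x_t$ occurs $\sum_{j} a_{i,j,t}$ times; since this count is proportional to $F_i(t)$ (the proportionality constant $Nm/\sum_u F_i(u)$ being independent of $t$), it represents the distribution $\D_{F_i}$, so symmetry and repetition invariance of $\M$ give $\M(\mathrm{row}\,(i,s))=\M_{F_i}(x)$, independently of $s$. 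Dually, $\Nm(\mathrm{col}\,(j,s'))=\Nm_{G_j}(x)$, independently of $s'$.

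Finally I would apply \eq{IJ} to $Z$. Its left-hand side is $\Nm$ of the $kN$ numbers $\M_{F_i}(x)$, each repeated $N$ times, which by repetition invariance of $\Nm$ equals $\Nm(\M_{F_1}(x),\dots,\M_{F_k}(x))$; its right-hand side is $\M$ of the $mN$ numbers $\Nm_{G_j}(x)$, each repeated $N$ times, equal by repetition invariance of $\M$ to $\M(\Nm_{G_1}(x),\dots,\Nm_{G_m}(x))$. This is exactly \eq{E:GIJ}. The main obstacle is precisely the design of the blocks: a naive inflation that refines only the columns makes the right-hand side of \eq{IJ} a nested mean, $\M$-of-$\Nm$ taken over the individual refined columns, which in general \emph{overshoots} $\M(\Nm_{G_1}(x),\dots,\Nm_{G_m}(x))$ (for the geometric--arithmetic pair this is visible from the AM--GM inequality) and so fails to close the estimate. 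The symmetric $N\times N$ Latin-square blocks are what force both inner means to reproduce $\M_{F_i}(x)$ and $\Nm_{G_j}(x)$ \emph{exactly}, after which repetition invariance collapses the duplicates cleanly in the correct direction.
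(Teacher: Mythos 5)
Your proof is correct, and it follows the paper's global strategy exactly (reduce to a rational transition matrix via Lemma~\ref{lem:Qs}, inflate to one big matrix, apply \eq{IJ} once, collapse the duplicates by symmetry and repetition invariance), but it replaces the paper's key technical device with a genuinely different, more elementary one. Where you build each $N\times N$ block $B_{i,j}$ as a circulant Latin-square realization of the multiset $\mu_{i,j}$, the paper instead invokes Lemma~\ref{lem:thetaG}: it partitions each unit rectangle $[i-1,i)\times[j-1,j)$ into pairwise disjoint $\Q$-simple sets $H^s_{i,j}$ that are $R_{i,j}(s)$-proportional, rescales by a common $\ell$ to get a $\Z$-simple partition of an integer grid, and reads off the big matrix from which cell each lattice point falls into; the row and column counts $B_b(s)$, $C_c(s)$ are then computed from proportionality. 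These are really two presentations of the same combinatorial core --- the sets $S_k$ in the proof of Lemma~\ref{lem:thetaG} are diagonal strips of little squares with cyclic wraparound, i.e.\ precisely your circulant pattern --- but your version works directly at the discrete level, avoids the measure-theoretic detour and the rescaling step entirely, and is self-contained, whereas the paper's formulation of Lemma~\ref{lem:thetaG} for a general subring $R$ buys a reusable statement at the cost of extra machinery for this particular theorem. One shared elision worth making explicit in a final write-up: the multiplicity of $x_t$ in your row $(i,s)$ is $\lambda_i F_i(t)$ with $\lambda_i=Nm/\sum_u F_i(u)$ only \emph{rational} in general, so the identity $\M(\mathrm{row})=\M_{F_i}(x)$ needs the observation that a symmetric repetition invariant mean depends only on the normalized multiplicity vector (write $\lambda_i=p/q$ in lowest terms and blow both multisets up by $q$ and $p$ respectively); the paper's step involving $\lambda_{j_0}G_{j_0}(t)$-fold repetitions glosses over exactly the same point, so this is a footnote, not a gap.
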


We formulate now an auxiliary result which will play a key role in the proof of this theorem. We need to introduce some further definitions. Let $R$ be a subring of $\R$. We say that $D\subseteq\R$ is an \emph{$R$-interval} if $D$ is of the form $[a,b)$, where $a,b\in R$. The Cartesian product of two $R$-intervals will be called an \emph{$R$-rectangle}. A set being a finite union of $R$-intervals or a finite union of $R$-rectangles is called \emph{$R$-simple}. For an $R$-simple set $H\subseteq\R$, we denote its Lebesgue measure by $|H|$. 
If $D$ and $E$ are $R$-intervals, $\theta\in[0,1]$, then $H\subset D \times E$ is called a \emph{$\theta$-proportional subset of $D\times E$} if 
\begin{enumerate}[(a)] 
 \item for all $x \in D$, $|\{y\in E \colon (x,y)\in H\}|=\theta \cdot |E|$,
 \item for all $y \in E$, $|\{x\in D \colon (x,y)\in H\}|=\theta \cdot |D|$.
\end{enumerate}

The following lemma generalizes Lemma 2.6 in \cite{PalPas18b} to the case $n\ge2$.

\begin{lem}\label{lem:thetaG}
Let $R$ be a subring of $\R$ such that $\Q R\subseteq R$.
Then, for every $R$-rectangle $D\times E$, $n\in\N$ and $\theta_1,\dots,\theta_n \in \Q \cap [0,1]$ with $\theta_1+\cdots+\theta_n\le 1$, there exist $R$-simple disjoint subsets $H_1,\dots,H_n\subseteq D\times E$ such that, for every $k \in \N_n$,  $H_k$ is a $\theta_k$-proportional subset of $D \times E$.
Moreover, if $\theta_1+\cdots+\theta_n=1$, then $(H_k)_{k=1}^n$ is a partition of $D \times E$.
\end{lem}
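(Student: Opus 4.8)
The plan is to construct the sets $H_1,\dots,H_n$ explicitly by partitioning the rectangle $D\times E$ along one coordinate into vertical strips, and then within each strip selecting a horizontal slab whose thickness is adjusted so that both proportionality conditions (a) and (b) hold simultaneously. Write $D=[a,b)$ and $E=[c,d)$, both $R$-intervals, and set $w:=b-a$, $h:=d-c$. The key geometric idea is a \emph{staircase} or \emph{shearing} construction: I would first subdivide $D$ into $n$ consecutive subintervals $D_1,\dots,D_n$ of equal length $w/n$ (note $w/n\in R$ since $\Q R\subseteq R$), and similarly subdivide $E$ into $n$ subintervals $E_1,\dots,E_n$ of equal length $h/n$. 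On the block $D_s\times E_t$, I place a horizontal slab occupying a $\theta_{\sigma(s,t)}$-fraction of its height, where the index $\sigma(s,t):=((s+t)\bmod n)+1$ cyclically permutes the thicknesses. The cyclic shift guarantees that as we range over a fixed column $s$ (resp.\ a fixed row $t$), each value $\theta_1,\dots,\theta_n$ occurs exactly once, so the total vertical (resp.\ horizontal) measure picked out is the same in every column (resp.\ row). This is the mechanism that forces conditions (a) and (b) to hold with the correct constants $\theta_k$.

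After this setup, the bulk of the argument is bookkeeping. For each $k\in\N_n$ I would define $H_k$ as the union, over all blocks $D_s\times E_t$ with $\sigma(s,t)=k$, of the corresponding slab $D_s\times[c+\tfrac{(t-1)h}{n},\,c+\tfrac{(t-1)h}{n}+\theta_k\cdot\tfrac{h}{n})$. Because the endpoints are built from $a,b,c,d\in R$ and rational multipliers, and $\Q R\subseteq R$, each $H_k$ is a finite union of $R$-rectangles, hence $R$-simple. The slabs within a single block are stacked bottom-up, so they are disjoint; and since distinct $H_k$ use disjoint $\theta$-fractions of the same block, the family $(H_k)_{k=1}^n$ is pairwise disjoint. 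To verify (a), fix $x\in D_s$ and sum the slab heights over $t\in\N_n$: each $\theta_j$ appears once as $t$ runs through a full cycle, so $|\{y:(x,y)\in H_k\}|=\theta_k\cdot\tfrac{h}{n}\cdot(\text{number of }t\text{ with }\sigma(s,t)=k)$, which must be reconciled to $\theta_k h=\theta_k|E|$. Symmetrically for (b).

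The main obstacle I anticipate is exactly this reconciliation: with the naive single-slab-per-block scheme, each value $k$ occurs once per column, giving vertical measure $\theta_k\cdot\tfrac{h}{n}$ rather than the required $\theta_k h$. The fix is to make the slab for index $k$ in block $(s,t)$ span the \emph{full} height $E_t$ scaled by $\theta_k$ but to realize $H_k$ as meeting \emph{every} column in a $\theta_k$-fraction; concretely, one selects within each vertical strip $D_s\times E$ (not each small block) a horizontal slab of height $\theta_k h$, and uses the shearing only to ensure the horizontal-slice condition (b). Thus the cleaner construction is: partition $D$ into $n$ strips $D_1,\dots,D_n$, and on strip $D_s$ let $H_k$ occupy the horizontal band starting at height $c+(\theta_1+\cdots+\theta_{\tau_s(k)-1})h$ of thickness $\theta_k h$, where $\tau_s$ is a cyclic permutation depending on $s$. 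Condition (a) is then immediate from the fixed thickness $\theta_k h$, while condition (b) follows because, for each fixed $y$, the cyclic shifts distribute the bands so that exactly a $\theta_k$-fraction of the strips contain $y$ in their $k$-th band. Checking that the band endpoints remain in $R$ and that the $\theta_1+\cdots+\theta_n=1$ case yields a genuine partition (no leftover measure) is then routine. The reduction from an arbitrary $R$-simple $D\times E$ to a single $R$-rectangle is harmless since one may treat each rectangle in the finite union separately.
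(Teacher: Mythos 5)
Your guiding idea---horizontal bands of thickness $\theta_k h$ cyclically sheared across vertical strips---is exactly the mechanism of the paper's proof, but your discretization is wrong, and the ``fix'' in your last paragraph fails condition (b). You use $n$ strips (where $n$ is the number of sets) and permute the \emph{order} of the bands by a cyclic permutation $\tau_s$ per strip. For a fixed $y\in E$, condition (b) then requires that the number of strips whose $k$-th band contains $y$ equals $\theta_k n$, which is in general not an integer, so no choice of the permutations $\tau_s$ can work. Concretely, take $D\times E=[0,1)^2$, $n=2$, $\theta_1=\tfrac13$, $\theta_2=\tfrac23$: in strip $1$ the bands are $[0,\tfrac13)$ and $[\tfrac13,1)$, in strip $2$ they are $[\tfrac23,1)$ and $[0,\tfrac23)$, and for $y\in[0,\tfrac13)$ the slice $\{x:(x,y)\in H_1\}$ has measure $\tfrac12\neq\tfrac13$. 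There is a second, independent defect: even when $\theta_k n\in\N$, shifting by \emph{whole bands} (cyclically permuting the band order) equidistributes the cover of a given height $y$ only when all bands have equal thickness; for unequal $\theta_k$ the cumulative offsets $\theta_1+\cdots+\theta_{\tau_s(k)-1}$ do not sweep out the right sets. Your first scheme ($n\times n$ blocks with $\sigma(s,t)=((s+t)\bmod n)+1$) fails for the reason you yourself identified, so the proposal contains no construction that actually satisfies (a) and (b) simultaneously.

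The missing idea is that the grid size must be a \emph{common denominator} of the rationals, not $n$. The paper writes $\theta_k=p_k/q$ with $q\in\N$, reduces to $[0,1)^2$ by an affine bijection $\varphi$ onto $D\times E$, partitions $[0,1)^2$ into the $q\times q$ cells $A_{i,j}=[\tfrac{i-1}{q},\tfrac iq)\times[\tfrac{j-1}{q},\tfrac jq)$, and with $P_k:=p_1+\cdots+p_k$ lets $S_k$ pick, in column $i$, the cells in cyclic row positions $i+P_{k-1},\dots,i+P_k-1 \pmod q$; the shear is thus exactly \emph{one cell per column}, so $S_k$ is a union of $p_k$ full cyclic diagonals and meets every row and every column in exactly $p_k$ cells. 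This gives both (a) and (b) with the value $p_k/q=\theta_k$ at once---precisely the reconciliation your two attempts could not achieve---and when $\theta_1+\cdots+\theta_n=1$ the diagonals exhaust the grid, yielding the partition claim. Setting $H_k:=\varphi(S_k)$ and using $\Q R\subseteq R$ (all cell endpoints are rational combinations of the endpoints of $D$ and $E$) gives $R$-simplicity; your bookkeeping for disjointness and $R$-simplicity would then go through once strips and band thicknesses are measured in units of $\tfrac1q$ rather than tied to $n$. Your closing remark about reducing an ``arbitrary $R$-simple $D\times E$'' to one rectangle is moot, since the lemma concerns a single $R$-rectangle.
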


\begin{proof} Let $D \times E$ be an $R$-rectangle. Then there exists an affine bijection $\varphi \colon [0,1)^2\to D \times E$. Indeed, if $D=[a,b)$ and $E=[c,d)$, then such an affine bijection can be given by
\Eq{*}{
  \varphi(t,s)=((1-t)a+tb,(1-s)c+sd)\qquad\text{ for }\qquad(t,s)\in[0,1)^2.
}
Let $\theta_k \in \Q \cap (0,1]$ for $k \in \N_n$ with $\theta_1+\dots+\theta_n \le 1$. Assume that, for every $k \in \N_n$, $\theta_k=p_k/q$, where $q\in \N$ and $p_k\in\N_q\cup \{0\}$. Put $P_0:=0$, $P_k:=p_1+\cdots+p_k$ for $k \in\N_n$ and
\Eq{*}{
A_{i,j}:=\left[\frac {i-1}q,\frac iq\right) \times \left[\frac {j-1}q,\frac jq\right)\qquad \text{for}\qquad i,\,j\in\N_q.
}
Furthermore, let % sets $S_k\subseteq[0,1)^2$ by
\Eq{*}{
S_k:=\bigcup_{i=1}^{q} \bigcup_{j=i+P_{k-1}}^{i+P_{k}-1} A_{i,j(\text{mod } q)+1}\qquad \text{for}\qquad k\in\N_n.
}
Note that $(S_k)_{k \in \N_n}$ is a family of pairwise disjoint $\Q$-simple subsets of $[0,1)^2$ and, for every $k \in \N_n$, $S_k$ is $\theta_k$-proportional. Therefore, for every $k \in \N_n$, $H_k:=\varphi(S_k)$ is a  $\theta_k$-proportional subset of $D\times E$. Moreover, as $\varphi$ is a bijection and $S_k$-s are pairwise disjoint, so are $H_k$-s. Due to the assumption $\Q R\subseteq R$, it also follows that each $H_k$ is $R$-simple.

If $\theta_1+\cdots+\theta_n=1$, then $P_n=p_1+\cdots+p_n=q$. Hence the family $(S_k)_{k \in \N_n}$ is a partition of $[0,1)^2$, and so the family $(H_k)_{k \in \N_n}$ is a partition of $D \times E$.
\end{proof}

Now we are in the position to proceed with the proof of theorem.

\begin{proof}[Proof of Theorem~\ref{thm:main}]
Let $x=(x_1,\dots,x_n) \in I^n$ and assume that $\big(\D_{F_i}\big)_{i=1}^k$ and $\big(\D_{G_j}\big)_{j=1}^m$ are $\R$-conjugated. In view of Lemma~\ref{lem:Qs}, we have that they are also $\Q$-conjugated, that is, there exists a transition matrix $R=(R_{i,j})_{(i,j) \in \N_k \times \N_m}$ with $\Q$-p.d.\ entries. 
Applying Lemma~\ref{lem:thetaG}, we conclude that, for every $(i,j) \in \N_k \times \N_m$, there exists a family $(H_{i,j}^s)_{s=1}^n$ of pairwise disjoint $\Q$-simple sets such that, for every $(i,j,s)\in \N_k \times \N_m \times \N_n$, the set $H_{i,j}^{s}$ is an $R_{i,j}(s)$-proportional in $[i-1,i)  \times [j-1,j)$. Moreover, this family is a partition of $[i-1,i)  \times [j-1,j)$. Furthermore, since the family is finite, there exists $\ell \in \N$ such that the family $(\ell H_{i,j}^s)_{(i,j,s) \in \N_k \times \N_m \times \N_n}$ is a partition of $[0,\ell k) \times [0,\ell n)$ consisting of $\Z$-simple sets. Thus, the family $(H^s)_{s \in \N_n}$ defined by $H^s:=\bigcup_{(i,j) \in \N_k \times \N_m} \ell H_{i,j}^s$ for $s \in \N_n$ is also a partition of $[0,\ell k) \times [0,\ell n)$ consisting of $\Z$-simple sets.

Define now a matrix $A=(a_{i,j})_{(i,j) \in \N_{\ell k} \times \N_{\ell m}}$ in the following way:
\Eq{*}{
a_{i,j}=s \qquad \text{ whenever }\quad (i-1,j-1) \in H^s, \quad s \in \N_n.
}
For every $b \in \N_{\ell k}$, $c \in \N_{\ell m}$ and $s \in\N_n$, we denote by $B_b(s)$ and $C_c(s)$ the number of appearance of $s$ in the $b$-th row and the $c$-th column of $A$, respectively. 

Fix $b\in \N_{\ell k}$. Then there exists a unique $j_0 \in \N_k$ such that $\tfrac {b-1} \ell \in [j_0-1,j_0)$. Using the $p_{i,j_0}(s)$-proportionality of $H_{i,j_0}^s$ for $(i,s)\in \N_k \times \N_n$, for every $s \in \N_n$, we obtain
\Eq{*}{
B_b(s)&=\abs{H^s \cap \big([0,\ell k) \times \{b\}\big)}
=\abs{\bigcup_{(i,j) \in \N_k \times \N_m} \ell H_{i,j}^s \cap \big([0,\ell k) \times \{b\}\big)}\\
&=\ell \sum_{(i,j) \in \N_k \times \N_m} \abs{H_{i,j}^s \cap \big([0,k) \times \{b/\ell\}\big)}
=\ell \sum_{i \in \N_k} \abs{H_{i,j_0}^s \cap \big([0,k) \times \{b/\ell\}\big)}\\
&
=\ell \sum_{i \in \N_k} \abs{H_{i,j_0}^s \cap \big([0,k) \times [j_0-1,j_0)\big)}=\ell \sum_{i \in \N_k} p_{i,j_0}(s).
}
Thus, in view of \eq{E:defconj}, we get
\Eq{E:Rr}{
B_b(s)=\frac{\ell k}{\sum_{u \in \N_n} G_{j_0}(u)} G_{j_0}(s) \quad \text{ for }s \in\N_n.
}
Similarly, for every $c \in \N_{m\ell}$, we obtain
\Eq{E:Cc}{
C_c(s)=\frac{\ell m}{\sum_{u \in \N_n} F_{i_0}(u)} F_{i_0}(s) \quad \text{ for }s \in\N_n,
}
where $i_0$ is a unique natural number such that $\tfrac {c-1} \ell \in [i_0-1,i_0)$.

After these preparations, we show that the Ingham--Jessen inequality for the pair $(\M,\Nm)$ applied to the matrix $X=(x_{a_{i,j}})_{(i,j) \in \N_k \times \N_m}$ yields inequality \eq{E:GIJ}. To this end, fix $b \in \N_{\ell k}$. Consider $j_0$ as previously and put 
\Eq{*}{
\lambda_{j_0}:=\frac{\ell k}{\sum_{u \in \N_n} G_{j_0}(u)}.
} 
Then, according to the symmetry and repetition invariance of $\Nm$, in view of \eq{E:Rr}, we get
\Eq{*}{
\Nm(x_{b,1},\dots,x_{b,m\ell})&=
\Nm(\underbrace{x_1,\dots,x_1}_{B_b(1)\text{-times}},\dots,\underbrace{x_n,\dots,x_n}_{B_b(n)\text{-times}})
=\Nm(\underbrace{x_1,\dots,x_1}_{\lambda_{j_0} G_{j_0}(1)\text{-times}},\dots,\underbrace{x_n,\dots,x_n}_{\lambda_{j_0} G_{j_0}(n)\text{-times}})\\
&=\Nm(\underbrace{x_1,\dots,x_1}_{G_{j_0}(1)\text{-times}},\dots,\underbrace{x_n,\dots,x_n}_{G_{j_0}(n)\text{-times}})
=\Nm_{G_{j_0}}(x_1,\dots,x_n).
%_{r=1}^{\ell l} \hat f(r,s) =\Mop_{\ell \in A_i}(x_\ell) \text{ for every } r \in \{\ell i-\ell+1,\dots,\ell i\}, \quad i \in \{1,\dots,k\}.
}
Hence, making use of repetition invariance of $\M$, we conclude that
\Eq{*}{
\M\big(\Nm(x_{1,1},\dots,x_{1,m\ell}),\dots,\Nm(x_{k\ell,1},\dots,x_{k\ell,m\ell})\big)
=\M\big(\Nm_{G_1}(x_1,\dots,x_n),\dots,\Nm_{G_k}(x_1,\dots,x_n)\big).
}
Similarly
\Eq{*}{
\Nm\big(\M(x_{1,1},\dots,x_{k\ell,1}),\dots,\M(x_{1,m\ell},\dots,x_{k\ell,m\ell})\big)
=\Nm\big(\M_{F_1}(x_1,\dots,x_n),\dots,\M_{F_m}(x_1,\dots,x_n)\big).
}
Since $(\M,\Nm)$ is an Ingham--Jessen pair, the inequality \eq{E:GIJ} follows.
\end{proof}

\section{Applications}

According to Theorem~\ref{thm:main} each Ingham--Jessen pair of means jointly with a pair of conjugated sequences of function implies validity of a suitable inequality. A few examples of Ingham--Jessen pairs were already mentioned in the introduction, now we will focus our attention on establishing conjugated sequences.
 
\subsection{Conjugacy related to Kedlaya's inequality}
\begin{thm}
For every $n \in \N$, the sequence $( \frac1i\vone_{\N_i} )_{i=1}^n$ is $\Q$-selfconjugated.
\end{thm}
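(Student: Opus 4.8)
The plan is to unwind the definition of self-conjugacy and then produce a transition matrix by a transportation/flow argument, using Lemma~\ref{lem:Qs} to dispose of rationality for free. Writing $P_i:=\tfrac1i\vone_{\N_i}$, the asserted self-conjugacy of $(P_i)_{i=1}^n$ amounts to finding a matrix $(R_{i,j})_{i,j\in\N_n}$ of probability distributions on $\N_n$ with $\tfrac1n\sum_{j}R_{i,j}=P_i$ and $\tfrac1n\sum_{i}R_{i,j}=P_j$ for all $i,j\in\N_n$. Since both sequences lie in $\Pi_n(\Q)^n$, Lemma~\ref{lem:Qs} shows it is enough to construct such an $R$ with \emph{real} entries; the rational version then follows automatically.

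First I would record the structural constraints that the marginal conditions impose. Because $P_i(t)=0$ for $t>i$ and all entries are nonnegative, the identity $\tfrac1n\sum_j R_{i,j}(t)=P_i(t)$ forces $R_{i,j}(t)=0$ whenever $t>i$, and by the column condition whenever $t>j$; hence $\supp R_{i,j}\subseteq\N_{\min(i,j)}$. It is then convenient to pass to the value-layer matrices $M^{(t)}:=(R_{i,j}(t))_{i,j\in\N_n}$: the requirements say precisely that each $M^{(t)}$ is a nonnegative matrix supported on the block $\{t,\dots,n\}^2$ with row sums $\sum_j M^{(t)}_{i,j}=n/i$ (for $i\ge t$) and column sums $n/j$ (for $j\ge t$), while the normalization $\sum_t R_{i,j}(t)=1$ becomes $\sum_{t=1}^n M^{(t)}=J$, where $J$ is the all-ones $n\times n$ matrix.

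The construction I would carry out peels off the value layers one at a time, organized through the telescoped partial sums $W^{(t)}:=\sum_{s\ge t}M^{(s)}$. Any admissible family must satisfy $W^{(1)}=J$, be nested $W^{(1)}\ge W^{(2)}\ge\cdots\ge W^{(n)}\ge0$ entrywise, be supported on $\{t,\dots,n\}^2$, and have row sums $(i-t+1)n/i$ and column sums $(j-t+1)n/j$; conversely any such nested family yields layers $M^{(t)}=W^{(t)}-W^{(t+1)}$ with exactly the required marginals, and the recursion terminates with $W^{(n)}=M^{(n)}$ being the single entry $1$ at position $(n,n)$. Note that $W^{(1)}=J$ is forced, and that at step $t$ the whole of row $t$ and column $t$ of $W^{(t)}$ must be absorbed into $M^{(t)}$ (since $W^{(t+1)}$ lives on the smaller block); a one-line check shows these sums equal $n/t$, so this is consistent. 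The remaining mass of $M^{(t)}$ then has to be placed inside $\{t+1,\dots,n\}^2$ with residual row sums $n/i-W^{(t)}_{i,t}$ and column sums $n/j-W^{(t)}_{t,j}$, subject to the box constraint $M^{(t)}_{i,j}\le W^{(t)}_{i,j}$ that guarantees $W^{(t+1)}\ge0$.

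The main obstacle is exactly the feasibility of this peeling step, which is a capacitated transportation problem whose solvability is governed by the Gale--Hoffman (equivalently max-flow/min-cut) inequalities. The per-row and per-column capacity bounds hold automatically, since $0\le n/i-W^{(t)}_{i,t}\le(i-t+1)n/i-W^{(t)}_{i,t}$ using $1\le n/i$ and $i\ge t$, and the entries stay in $[0,1]$ by nestedness; the work is to see that the full subset conditions survive along the recursion. The cleanest device will be either to verify the Gale--Hoffman subset inequalities directly, or to exhibit an explicit feasible point such as the rank-one assignment $M^{(t)}_{i,j}=a_ib_j/S$, where $a_i,b_j$ are the residual row and column sums and $S$ their common total, and to check that it respects the cap $W^{(t)}_{i,j}$. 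Controlling this amounts to tracking the harmonic-type marginals $(i-t+1)n/i$; the favorable feature that should make it go through is that the distributions form a majorization chain $P_1\succ P_2\succ\cdots\succ P_n$, which keeps the residual demands small enough to fit under the caps. Once a real transition matrix is produced, Lemma~\ref{lem:Qs} upgrades it to a rational one, completing the proof.
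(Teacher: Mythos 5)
Your reformulation is correct as far as it goes: the reduction of self-conjugacy to the layer matrices $M^{(t)}:=(R_{i,j}(t))_{i,j}$ with row sums $n/i$, column sums $n/j$, support in $\{t,\dots,n\}^2$ and $\sum_t M^{(t)}=J$ is an accurate unwinding of \eqref{E:defconj}, the telescoping to the nested family $W^{(t)}$ with marginals $(i-t+1)n/i$ checks out, the observation that row $t$ and column $t$ of $W^{(t)}$ are forced into $M^{(t)}$ is consistent (both sums equal $n/t$), and invoking Lemma~\ref{lem:Qs} to upgrade a real transition matrix to a rational one is legitimate. But the proof has a genuine gap exactly where you locate it yourself: the feasibility of the peeling step is asserted, not proved. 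You write that ``the work is to see that the full subset conditions survive along the recursion'' and that ``the cleanest device \emph{will be} either to verify the Gale--Hoffman subset inequalities directly, or to exhibit an explicit feasible point \dots and to check that it respects the cap'' --- neither verification is carried out. The per-row and per-column capacity checks you do perform are necessary but far from sufficient: the Gale--Hoffman conditions quantify over all subsets $S\times T$ of the residual block, and moreover feasibility of a single step does not guarantee that the chosen $M^{(t)}$ leaves a residual $W^{(t+1)}$ from which the recursion can be continued, since the caps $W^{(t)}_{i,j}$ depend on the entire history of choices. The appeal to the majorization chain $P_1\succ P_2\succ\cdots\succ P_n$ is a heuristic with no argument attached; nothing in the proposal converts it into a bound on the residual demands against the caps. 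So what is left unproved is precisely the existence statement the theorem asserts.

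For comparison, the paper's proof sidesteps all of this by exhibiting Kedlaya's explicit matrix
\Eq{*}{
r_{i,j}(k)=\frac{(n-i)!(n-j)!(i-1)!(j-1)!}{(n-1)!(k-1)!(n-i-j+k)!(i-k)!(j-k)!},
}
whose nonnegativity, normalization, symmetry, and column-sum identities (properties (i)--(iv), established in \cite{Ked94}) directly yield a transition matrix; rationality is then automatic from the formula, so Lemma~\ref{lem:Qs} is not even needed for this theorem. Your transportation-flow scheme is a genuinely different and potentially interesting route (for $n=3,4$ the rank-one assignment $a_ib_j/S$ does respect the caps, and the resulting matrix differs from Kedlaya's, showing transition matrices are not unique), but to make it a proof you would need either a closed form for $W^{(t)}$ under your update rule together with a verified cap inequality, or a full Gale--Hoffman verification propagated inductively --- and that is the entire mathematical content of the theorem.
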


\begin{proof}
Following Kedlaya's idea, define a family $(r_{i,j})_{(i,j) \in \N_n^2}$ of measures on $\N_n$ by 
\Eq{*}{
r_{i,j}(k)%=\binom{n-i}{j-k} \binom{i-1}{k-1} \bigg/ \binom{n-1}{j-1}
  =\frac{(n-i)!(n-j)!(i-1)!(j-1)!}{(n-1)!(k-1)!(n-i-j+k)!(i-k)!(j-k)!} \qquad \text {for } k \in \N_n.
}
We adopt a convention $m!=\infty$ for negative integers $m$. It has been proved in \cite{Ked94} that this family has the following properties:
\begin{enumerate}[\quad(i)\quad]
\item $r_{i,j}(k) \ge 0$ \quad for $i,j,k \in \N_n$;
\item $\sum_{k=1}^{n}r_{i,j}(k)=1$ \quad for $i,j \in \N_n$;
\item $r_{i,j}(k) = r_{j,i}(k)$ \quad for $i,j,k \in \N_n$;
\item $\sum_{i=1}^{n}r_{i,j}(k)
=\begin{cases} 
n/j & \text{ for }j,k \in \N_n \text{ with } k \le j, \\ 
0 & \text{ for } j,k \in \N_n \text{ with } k>j. 
\end{cases}$
\end{enumerate}
It follows from (i) and (ii) that $(r_{i,j})_{(i,j) \in \N_n^2}$ is a family of probability distributions on $\N_n$. Furthermore, applying (iii) and (iv), one can verify that the matrix $R=(r_{i,j})_{i,\,j \in \N_n}$ validates the $\Q$-selfconfugacy of the family $( \frac1i\vone_{\N_i} )_{i=1}^n$.
\end{proof}

Using that the geometric mean-arithmetic mean, i.e., the pair $(\P_0,\P_1)$ is an Ingham--Jessen pair and then applying Theorem~\ref{thm:main} and the result above, we can derive Kedlaya's inequality.

\subsection{Conjugacy related to combinations}
Let, for every $n \in \N$ and $k \in \N_n$, $C_n^k$ denote the lexicographically ordered sequence consisting of the characteristic functions of all $k$-element subsets of $\N_n$ (cf.\ Example~\ref{exa:LenSiZhu}). 

\begin{thm}
For every $k,\,l,\,n \in \N$ with $\max(k,l)\le n \le k+l-1$ the families $C_n^k$ and $C_n^l$ are conjugated.
\end{thm}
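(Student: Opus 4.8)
The plan is to exhibit an explicit $\Q$-valued transition matrix and verify \eq{E:defconj} directly. The key observation is that the hypothesis $n\le k+l-1$ is exactly what guarantees that every $k$-element subset $S\subseteq\N_n$ and every $l$-element subset $T\subseteq\N_n$ satisfy $|S\cap T|\ge k+l-n\ge 1$, so $S\cap T\ne\emptyset$. Recalling that, as a sequence of $\Q$-p.d., $C_n^k$ is given by the distributions $\D_{\vone_S}=\tfrac1k\vone_S$ over the $k$-subsets $S$ (and $C_n^l$ by $\tfrac1l\vone_T$ over the $l$-subsets $T$), I would define the candidate matrix $R=(R_{S,T})$, indexed by pairs consisting of a $k$-subset $S$ and an $l$-subset $T$, by letting $R_{S,T}$ be the uniform distribution on the nonempty intersection:
$$R_{S,T}(a):=\frac{\vone_{S\cap T}(a)}{|S\cap T|}\qquad(a\in\N_n).$$
Each $R_{S,T}$ is a genuine $\Q$-valued probability distribution, so $R\in\Pi_n(\Q)^{\binom nk\times\binom nl}$.

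It then remains to verify the two averaging identities of \eq{E:defconj}. Since the construction is symmetric under interchanging the roles of $(k,S)$ and $(l,T)$, it suffices to treat the row condition $\tfrac1k\vone_S=\binom nl^{-1}\sum_T R_{S,T}$ coordinatewise. As $R_{S,T}$ is supported on $S$, every coordinate $a\notin S$ vanishes on both sides; for $a\in S$ the claim reduces to
$$\sum_{T\ni a}\frac{1}{|S\cap T|}=\frac1k\binom nl.$$
To compute the left-hand side I would group the $l$-subsets $T$ containing $a$ according to $j:=|S\cap T|\in\{1,\dots,\min(k,l)\}$: choosing the remaining $j-1$ common elements from $S\setminus\{a\}$ and the other $l-j$ elements of $T$ from $\N_n\setminus S$ shows that there are exactly $\binom{k-1}{j-1}\binom{n-k}{l-j}$ such subsets, whence the sum equals $\sum_j\tfrac1j\binom{k-1}{j-1}\binom{n-k}{l-j}$.

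The computation then closes with the elementary identity $\tfrac1j\binom{k-1}{j-1}=\tfrac1k\binom kj$, which turns the sum into $\tfrac1k\sum_j\binom kj\binom{n-k}{l-j}=\tfrac1k\binom nl$ by the Vandermonde convolution. The column condition follows identically after exchanging $k\leftrightarrow l$ and $S\leftrightarrow T$.

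I do not anticipate a serious obstacle: the construction is essentially forced, since the support argument demands that $R_{S,T}$ live on $S\cap T$, and the only genuine input is the nonemptiness of this intersection, which is precisely the hypothesis $n\le k+l-1$. The point deserving care is the bookkeeping of the normalizations $\binom nk$, $\binom nl$, $k$, $l$ so that they match \eq{E:defconj} exactly; taking $R_{S,T}$ \emph{uniform} on $S\cap T$ (rather than some other distribution there) is what makes the Vandermonde identity apply cleanly. Note finally that specializing to $k=l$ recovers the $\Q$-selfconjugacy of $C_n^k$ asserted in Example~\ref{exa:LenSiZhu}, the constraint $\tfrac n2<k\le n$ there being the same as $\max(k,k)\le n\le 2k-1$.
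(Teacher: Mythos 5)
Your proposal is correct and is essentially identical to the paper's own proof: the paper also takes the transition entries to be the uniform distributions on the intersections $\supp F_i \cap \supp G_j$ (nonempty precisely because $n\le k+l-1$), counts the $l$-subsets through a fixed point with intersection size $m$ as $\binom{k-1}{m-1}\binom{n-k}{l-m}$, and closes with the same identity $\tfrac1m\binom{k-1}{m-1}=\tfrac1k\binom km$ followed by the Vandermonde convolution. (Both arguments tacitly use that the $m=0$ Vandermonde term $\binom{n-k}{l}$ vanishes under the hypothesis $l>n-k$, so no discrepancy arises there.)
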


\begin{proof}
Fix $k,l,n \in \N$ with $\max(k,l)\le n \le k+l-1$.  Set 
\Eq{*}{
C_n^k=:\Big(F_1,\dots,F_{\binom n k}\Big)
\qquad\mbox{and}\qquad 
C_n^l=:\Big(G_1,\dots,G_{\binom n l}\Big).
}
We need to show that there exists a transition matrix $R=(r_{ij})\in\Pi_n(\R)^{\binom n k \times \binom n l}$ of probability distributions such that
\Eq{E:FG}{
  \D_F(i) = \frac1{\binom n l} \sum_{j=1}^{\binom n l} R_{i,j}\quad\text{for }i\in\N_{\binom n k} \qquad \mbox{and}\qquad
  \D_G(j) = \frac1{\binom n k} \sum_{i=1}^{\binom n k} R_{i,j}\quad\text{for }j\in\N_{\binom n l}.
}
Note that, by the inequality $n \le k+l-1$, we have that $F_i \cdot G_j$ is not identically zero for all $i,j$. 
Therefore, for every $(i,j) \in \N_{\binom n k} \times \N_{\binom n l}$, we can define $r_{i,j}$ as the uniform probability distribution on the support of $F_i \cdot G_j$.
Then, for $i \in \N_{\binom n k}$ and $s \in \supp F_i$, we have
\Eq{*}{
\sum_{j=1}^{\binom n l} r_{i,j}(s)&=
\sum_{j \colon s \in \supp G_j} r_{i,j}(s)
=\sum_{j \colon s \in \supp G_j} \frac{1}{|\supp F_i \cap \supp G_j|}\\
&=\sum_{m=1}^n \frac{\big|\big\{j \colon s \in \supp G_j,\, |\supp F_i \cap \supp G_j|=m \big\}\big|}{m}\\
&=\sum_{m=1}^n \frac{1}{m}\binom{k-1}{m-1} \binom{n-k}{l-m}
=\sum_{m=1}^n \frac{1}{k}\binom{k}{m} \binom{n-k}{l-m}
=\frac1{k}\binom{n}{l}.
}
Hence, for every $i \in \N_{\binom n k}$, $\binom{n}{l}^{-1}\sum_{j=1}^{\binom n l} r_{i,j}$ is a uniform probability distribution on $\supp F_i$. On the other hand, for every $i \in \N_{\binom n k}$, the function $\frac{F_i}{\sum_{s \in \N_{n}} F_i(s)}$ is also a uniform distribution on $\supp F_i$, which implies the first equality in \eq{E:FG}. The second one can be reached in similar way. This proves the $\Q$-conjugacy of $C_n^k$ and $C_n^l$.
\end{proof}

Since the pair $(\P_0,\P_1)$ is an Ingham--Jessen pair, from Theorem~\ref{thm:main} and the result above, we can obtain the inequality due to Leng--Si--Zhu \cite{LenSiZhu04}.

\subsection{Conjugacy of cyclic blocks}
For the purpose of this section, for a given $k,\,n \in \N$ with $k \le n$, denote
$$O_n^k:=\tfrac1k\big(\vone_{\{1,\dots,k\}},\vone_{\{2,\dots,k+1\}},\dots,\vone_{\{n-k+1,\dots,n\}},\vone_{\{n-k+2,\dots,n,1\}},\dots,\vone_{\{n,1,\dots,k-1\}}\big).$$

We conclude this paper by providing a necessary and sufficient condition for $\Q$-conjugacy of two families $O_n^k$. For the sake of convenience, define the canonical projection $\Pr_n \colon \Z \to \N_n$ by $\Pr_n(k) \equiv k \pmod{n}$ for $k \in \Z$.

\begin{thm}\label{thm:cycle}
Assume that $n,\,k,\,l \in \N$ are such that $k \le l\le n \le k+l-1$. Then the sequences $O_n^k$ and $O_n^l$ are $\Q$-conjugated if and only if there exists a matrix $(A_{i,j})_{(i,j) \in \N_k \times \N_n}$ having nonnegative real entries such that 
\begin{enumerate}[(i)]
\item $\sum_{i=1}^{k} A_{i,j}=1$ for $j \in \N_n$,
\item $\sum_{j=1}^{n} A_{i,j}=n/k$ for $i \in \N_k$,
\item $\sum_{i=1}^{k} A_{i,\Pr_n(i+j-1)}$ is equal to $n/l$ for $j \in \N_l$ and equals zero for $j \in \N_n\setminus\N_l$.
\end{enumerate}
\end{thm}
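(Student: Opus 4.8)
The plan is to exploit the cyclic symmetry that both families $O_n^k$ and $O_n^l$ enjoy, so as to reduce the search for a transition matrix to the search for its single ``fundamental'' row, which is exactly the matrix $A$. Throughout I identify $\N_n$ with $\Z_n=\{0,\dots,n-1\}$ and let $\sigma$ denote the cyclic shift $t\mapsto\Pr_n(t+1)$; the $r$-th entry of $O_n^k$ is $P_r=\tfrac1k\vone_{B_r}$ with $B_r=\{\Pr_n(r),\dots,\Pr_n(r+k-1)\}$, and likewise the $s$-th entry of $O_n^l$ is $Q_s=\tfrac1l\vone_{B_s'}$. Both families are invariant under the simultaneous action of $\sigma$ on the index set and on the ground set $\N_n$, since $B_{\sigma(r)}=\sigma(B_r)$. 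Because the $O$'s are $\Q$-valued, Lemma~\ref{lem:Qs} lets me pass freely between $\Q$- and $\R$-conjugacy, so I may characterize $\R$-conjugacy by a real matrix $A$.

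First I would establish the key reduction by averaging. If $R=(R_{r,s})$ is any real transition matrix between $O_n^k$ and $O_n^l$, then for each $c\in\Z_n$ the shifted array $R^{(c)}_{r,s}(t):=R_{\sigma^c(r),\sigma^c(s)}(\sigma^c(t))$ is, by the $\sigma$-invariance of the two families, again a transition matrix between them (re-indexing the sums over $s$, over $r$, and over $t$ gives back $P_r$, $Q_s$, and normalization). As the defining relations \eq{E:defconj} are affine, the average $\tilde R:=\frac1n\sum_{c=0}^{n-1}R^{(c)}$ is a cyclically invariant transition matrix. Invariance means $\tilde R_{r,s}(t)=\rho(s-r,\,t-r)$ for a single $\rho\colon\Z_n\times\Z_n\to[0,\infty)$, all arithmetic taken modulo $n$. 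Writing out the conditions on $\tilde R$ turns them into: $\rho(a,\cdot)$ is a probability distribution for each $a$; $\sum_a\rho(a,b)=\tfrac{n}{k}\vone_{\{0,\dots,k-1\}}(b)$ (from the row relation producing the $P_r$); and $\sum_a\rho(a,a+c)=\tfrac{n}{l}\vone_{\{0,\dots,l-1\}}(c)$ (from the column relation producing the $Q_s$). In particular, nonnegativity together with the second relation forces $\rho(a,b)=0$ for $b\notin\{0,\dots,k-1\}$, so $\rho$ is carried by exactly $k$ values of its second argument.

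The second step is the translation into (i)--(iii). I would set $A_{i,j}:=\rho\big((j-l)\bmod n,\;i-1\big)$ for $(i,j)\in\N_k\times\N_n$, that is, read the $k$ surviving rows of $\rho$ into $A$ after a cyclic shift of the $n$ columns by $l$; this relabeling is harmless for the first two relations but is precisely what aligns the diagonal relation with the normalization $\Pr_n(i+j-1)$ in (iii). Under this identification the distribution condition becomes (i), the relation $\sum_a\rho(a,b)=n/k$ becomes (ii), and the substitution $b=(a+c)\bmod n$ gives $\sum_a\rho(a,a+c)=\sum_{i=1}^k A_{i,\Pr_n(i+(l-1-c))}$, so the third relation is exactly (iii) with $j=\Pr_n(l-c)$ sweeping out $\N_l$ as $c$ runs over $\{0,\dots,l-1\}$ and $\N_n\setminus\N_l$ otherwise. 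Running these equivalences backwards yields the converse: from a nonnegative $A$ obeying (i)--(iii) I define $\rho$ and then $\tilde R_{r,s}(t):=\rho(s-r,t-r)$, and (i)--(iii) guarantee that $\tilde R$ is a genuine transition matrix, so $O_n^k$ and $O_n^l$ are $\R$-conjugate and hence, by Lemma~\ref{lem:Qs}, $\Q$-conjugate.

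The main obstacle I anticipate is bookkeeping rather than conceptual: keeping the modular arithmetic and the identifications ($\Z_n$ versus $\N_n$, the shift by $l$, the exact role of $\Pr_n$) consistent, so that the diagonal relation lands on $j\in\N_l$ and not on a shifted or reflected copy of it. The one genuinely substantive point is the averaging argument, which is what permits the passage from an arbitrary transition matrix to a cyclically invariant one and thereby collapses the full $n\times n$ array of distributions to the single block $A$. The hypotheses $k\le l\le n\le k+l-1$ are carried along but are not actually needed for the equivalence itself, and the equal-average necessary condition of the Remark is automatic here, since averaging either family yields the uniform distribution on $\N_n$.
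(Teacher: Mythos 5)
Your proof is correct and follows essentially the same route as the paper: you average an arbitrary transition matrix over the cyclic shift to obtain an invariant one (the paper's fixed point $Q=\frac1n\sum_{\alpha=0}^{n-1}F^\alpha(R)$ is exactly your $\tilde R$), your extraction $A_{i,j}=\rho\big((j-l)\bmod n,\,i-1\big)$ coincides with the paper's $A_{i,j}:=q_{1,\Pr_n(j-l+1)}(i)$, and your converse construction $\tilde R_{r,s}(t)=\rho(s-r,t-r)$ is precisely the paper's $r_{i,j}(s):=A_{\Pr_n(s-i+1),\Pr_n(j-i+1)}$ with $A$ extended by zeros. Your only presentational difference is packaging the invariance into the single function $\rho$ rather than invoking the fixed-point property of $F$ inside each computation, and your explicit appeal to Lemma~\ref{lem:Qs} in the sufficiency direction makes a step the paper leaves implicit.
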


Before we start the proof, let us apply this result in a simple example.

\begin{exa} %Let $n:=7$. 
Each of the sequences $O_7^3$ and $O_7^4$ are $\Q$-conjugated to $O_7^5$. Indeed, one can easily check that the matrices 
\Eq{*}{
\frac{1}{15} \cdot \begin{bmatrix}
15 & 10 & 6 & 3 & 1 & 0 & 0 \\
0 & 5 & 8 & 9 & 8 & 5 & 0 \\
0 & 0 & 1 & 3 & 6 & 10 & 15
%15 & 0 & 0\\
%10 & 5 & 0\\
% 6 & 8 & 1\\
% 3 & 9 & 3\\
% 1 & 8 & 6\\
% 0 & 5 &10\\
% 0 & 0 &15
\end{bmatrix}
\qquad\text{and}\qquad
\frac{1}{20} \cdot 
\begin{bmatrix}
10 & 11 & 4 & 6 & 4 & 0 & 0 \\
0 & 9 & 11 & 10 & 0 & 5 & 0 \\
0 & 0 & 5 & 0 & 10 & 11 & 9 \\
10 & 0 & 0 & 4 & 6 & 4 & 11
%10 &  0 &  0 & 10 \\
%11 &  9 &  0 &  0 \\
% 4 & 11 &  5 &  0 \\
% 6 & 10 &  0 &  4 \\
% 4 &  0 & 10 &  6 \\
% 0 &  5 & 11 &  4 \\
% 0 &  0 &  9 & 11 \\
\end{bmatrix}
}
satisfy the conditions (i)--(iii) in the proposition above.
Therefore, in view of Theorem~\ref{thm:main} and the Ingham--Jessen property of the geometric-arithmetic means, the $\Q$-conjugacy of $O_7^3$ and $O_7^5$ implies the following inequality:
\Eq{*}{
&\frac{\sqrt[3]{x_1x_2x_3}+\sqrt[3]{x_2x_3x_4}+\cdots+\sqrt[3]{x_7x_1x_2}}{7} \\
&\hspace{15mm}\le \sqrt[7]{\frac{x_1+x_2+x_3+x_4+x_5}{5} \cdot \frac{x_2+x_3+x_4+x_5+x_6}{5} \cdots 
\frac{x_7+x_1+x_2+x_3+x_4}{5}},
}
Analogously, the conjugancy of $O_7^4$ and $O_7^5$ yields
\Eq{*}{
&\frac{\sqrt[4]{x_1x_2x_3x_4}+\sqrt[4]{x_2x_3x_4x_5}+\cdots+\sqrt[4]{x_7x_1x_2x_3}}{7} \\
&\hspace{15mm}\le \sqrt[7]{\frac{x_1+x_2+x_3+x_4+x_5}{5} \cdot \frac{x_2+x_3+x_4+x_5+x_6}{5} \cdots 
\frac{x_7+x_1+x_2+x_3+x_4}{5}}.
}
\end{exa}

\begin{proof}[Proof of Theorem \ref{thm:cycle}] Assume that the sequences $O_n^k$ and $O_n^l$ are $\Q$-conjugated. Let $R$ be a transition matrix of probability distributions between them.

For the sake of convenience, let $T \colon \N_n \to \N_n$ be given by $T(k):=\Pr_n(k+1)$.
Furthermore, for every matrix $(\mu_{i,j})_{i,\,j\in\N_n}$ with entries being measures on $\N_n$, define 
\Eq{*}{
F((\mu_{i,\,j})_{i,\,j\in\N_n}):=\big(\mu_{T(i),\,T(j)}\circ T\big)_{i,\,j\in\N_n}.
}
Then $F$ is linear, $F^n$ is the identity and, for every $\alpha\in \N_n$, $F^\alpha(R)$ is a transition matrix for the considered pair of sequences. Consequently,
\Eq{*}{
Q:=(q_{i,j})_{i,j\in\N_n}:=\frac{1}{n} \sum_{\alpha=0}^{n-1} F^\alpha(R)
}
is a fixed point of $F$. Moreover, as every convex combination of transition matrices is again a transition matrix, $Q$ is a transition matrix for $O_n^k$ and $O_n^l$. Set
\Eq{*}{
A_{i,j}:=q_{1,\Pr_n(j-l+1)}(i) \qquad \text{for}\quad (i,j) \in \N_k \times \N_n.
}
We are going to prove that the matrix $A=(A_{i,j})$ satisfies the conditions (i)--(iii). Since $q_{1,\Pr_n(j-l+1)}$-s are probability distributions on $\N_k$, we get (i). Furthermore, for $i\in\N_k$, we have 
\Eq{*}{
\frac{1}{n} \sum_{j=1}^{n} A_{i,j}
=\frac{1}{n} \sum_{j=1}^{n} q_{1,\Pr_n(j-l+1)}(i)
=\frac{1}{n} \sum_{j=1}^{n} q_{1,j}(i)
=[O_n^k]_1(i)
=\frac1k\vone_{\{1,\dots,k\}}(i)=\frac1k,
}
which validates (ii). Finally, as $Q$ is a fixed point of $F^{n+1-j}$, for every $i \in \N_n$, we obtain
\Eq{*}{
\sum_{j=1}^{k} A_{j,\Pr_n(i+j-1)}
&=\sum_{j=1}^{k} q_{1,\Pr_n(i+j-l)}(j)
%=\sum_{j=1}^{k} q_{n-j \text{ mod } n,1}(\{i\})
=\sum_{j=1}^{k} q_{T^{n+1-j}(1),T^{n+1-j}\circ \Pr_n(i+j-l)}(T^{n+1-j}(j))\\
&=\sum_{j=1}^{k} q_{\Pr_n(2-j),\Pr_n(i-l+1)}(1)
=q_{1,\Pr_n(i-l+1)}(1)+\sum_{j=n+2-k}^n  q_{j,\Pr_n(i-l+1)}(1)\:.
%=\sum_{j=1}^{n} q_{j ,i}(\{1\}).
}
However, since $1 \notin \supp \big([O_n^k]_j\big)$ for $j \in \{2,\dots,n+1-k\}$ and $Q$ is a transition matrix between the sequences $O_n^k$ and $O_n^l$, we have $q_{j,\Pr_n(i-l+1)}(1)=0$ for $j \in \{2,\dots,n+1-k\}$. Thus, using again that $Q$ is a transition matrix, we get
\Eq{*}{
\sum_{j=1}^{k} A_{j,\Pr_n(i+j-1)}
&=q_{1,\Pr_n(i-l+1)}(1)+\sum_{j=n+2-k}^n  q_{j,\Pr_n(i-l+1)}(1)\\
&=\sum_{j=1}^{n} q_{j ,\Pr_n(i-l+1)}(1) 
=n [O_n^l]_{\Pr_n(i-l+1)}(1)
=\begin{cases}
\frac nl \quad  &\text {for } i \in \N_l, \\[1mm]
0  \quad &\text {for } i \in \N_n\setminus\N_l.
 \end{cases}
}
Therefore (iii) holds, and so the proof of the necessity of (i)--(iii) is completed.

To prove the converse implication, fix a matrix $A=(A_{i,j})_{(i,j) \in \N_k \times \N_n}$ with the properties (i)--(iii). Extend the matrix $A$ by putting 
\Eq{A_fill}{
A_{i,j}:=0\quad \text{ for }(i,j) \in (\N_n \setminus \N_k) \times \N_n\,.
}
Let $R=(r_{i,j})_{i,\,j \in \{1,\dots,n\}}$ be a matrix with entries being measures defined as follows
\Eq{*}{
r_{i,j}(s):= A_{\Pr_n(s-i+1),\Pr_n(j-i+1)} \qquad s \in \N_n.
}
It follows from (i) that, for every $(i,j) \in \N_n \times \N_n$, $r_{i,j}$ is a probability distribution on $\N_n$.
Furthermore, for every $j,\,s \in \N_n$, we have
\Eq{*}{
\sum_{i=1}^n r_{i,j}(s)
&= \sum_{i=1}^n A_{\Pr_n(s-i+1),\Pr_n(j-i+1)}
=  \sum_{i=1}^n A_{\Pr_n(s-i+1),\,\Pr_n((j-s+1)+(s-i+1)-1)}
=  \sum_{i=1}^n A_{i,\,\Pr_n((j-s+1)+i-1)}
%&=\sum_{i=1}^n A_{((j-i)+(s-j)) \:\text{mod}\:n,\,(j-i) \:\text{mod}\:n}.
}
and  so, in view of \eq{A_fill} and (iii), we obtain
\Eq{*}{
\frac1n\sum_{i=1}^n r_{i,j}(s)
= \begin{cases}1/l & \text{ whenever } \Pr_n(s-j+1)\in \N_l, \\0 & \text{ otherwise.} 
\end{cases}\Bigg\}
=[O_n^l]_j(s).
}
Hence the second equality in \eq{E:defconj} is valid with $Q:=O_n^l$. In order to get the first one with $P:=O_n^k$, it is enough to note that, for every $i,s \in \N_n$, it holds
\Eq{*}{
\sum_{j=1}^n r_{i,j}(s)&= \sum_{j=1}^n A_{\Pr_n(s-i+1),\,\Pr_n(j-i+1)} =\sum_{j=1}^n A_{\Pr_n(s-i+1),\,j},
}
whence, by (ii), we get 
\Eq{*}{
\frac1n\sum_{j=1}^n r_{i,j}(s)&= 
\begin{cases}
1/k&\text{whenever }\Pr_n(s-i+1) \in \N_k,\\
0 & \text{otherwise.} \end{cases}\Bigg\}
=[O_n^k]_i(s).
}
Therefore, the matrix $R$ provides transition between the mentioned families.
\end{proof}

\begin{xrem}
Note that in the case where $n \ge k+l$ neither the considered families are conjugated nor the matrix with the properties (i)-(iii) exists. In a light of Theorems~\ref{thm:main} and \ref{thm:cycle} the following question arises naturally. Does for every $k,\,l,\,n\in \N$ with $k \le l \le k+n-1$ there exist a matrix $(A_{i,j})_{(i,j) \in \N_k \times \N_n}$ with the properties (i)--(iii). We mention that for $n\leq 17$, we have verified the existence of such a matrix by manual calculations.
\end{xrem}

However, in the particular case when $l=n-k+1$, we can prove the existence of the matrix $A$ and therefore the transition matrix between $O_n^k$ and $O_n^l$.

\begin{thm} Let $m,n\in\N$ with $m\leq n$. Then the sequences $O_n^m$ and $O_n^{n-m+1}$ are $\Q$-conjugated.
\end{thm}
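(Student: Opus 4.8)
The plan is to reduce the claim to Theorem~\ref{thm:cycle} and then to exhibit an explicit matrix meeting its conditions (i)--(iii). Since $\Q$-conjugacy is symmetric in the two sequences, I may assume without loss of generality that $k:=\min(m,n-m+1)\le l:=\max(m,n-m+1)$; then $\{k,l\}=\{m,n-m+1\}$, one checks that $l=n-k+1$, and hence $k\le l\le n$ with $n=k+l-1$. In particular the hypotheses $k\le l\le n\le k+l-1$ of Theorem~\ref{thm:cycle} hold (with equality in the last inequality), so it suffices to produce a matrix $A=(A_{i,j})_{(i,j)\in\N_k\times\N_n}$ with nonnegative entries satisfying (i)--(iii).

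Guided by the $3\times 7$ matrix appearing in the example for $O_7^3$ and $O_7^5$, I would try the explicit candidate
\[
A_{i,j}:=\frac{\binom{j-1}{i-1}\binom{n-j}{k-i}}{\binom{n-1}{k-1}}\qquad\text{for }(i,j)\in\N_k\times\N_n.
\]
Its entries are manifestly nonnegative rationals. Condition (i) (column sums equal to $1$) would follow at once from the Vandermonde identity $\sum_{i=1}^{k}\binom{j-1}{i-1}\binom{n-j}{k-i}=\binom{n-1}{k-1}$. Condition (ii) (row sums equal to $n/k$) would follow from the companion identity $\sum_{j=1}^{n}\binom{j-1}{i-1}\binom{n-j}{k-i}=\binom{n}{k}$ together with $\binom{n}{k}=\tfrac nk\binom{n-1}{k-1}$.

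The heart of the matter is condition (iii), which concerns the cyclic diagonal sums $D_j:=\sum_{i=1}^{k}A_{i,\Pr_n(i+j-1)}$, and here I would split according to whether a wraparound occurs. For $j\in\N_l$ one has $i+j-1\le k+l-1=n$, so no reduction modulo $n$ is needed; after the substitutions $i':=i-1$ and $c:=j-1$, the quantity $D_j\binom{n-1}{k-1}$ becomes $\sum_{i'=0}^{k-1}\binom{c+i'}{i'}\binom{(n-1)-c-i'}{(k-1)-i'}$. I expect this to collapse, via the convolution identity $\sum_{i'=0}^{K}\binom{c+i'}{i'}\binom{d+K-i'}{K-i'}=\binom{c+d+K+1}{K}$ (applicable because $j\le l$ forces the parameter $d=n-k-c$ to be nonnegative), to $\binom{n}{k-1}$, whence $D_j=\binom{n}{k-1}/\binom{n-1}{k-1}=n/(n-k+1)=n/l$, as required. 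For $j\in\N_n\setminus\N_l$, writing $j=n-k+1+t$ with $t\in\{1,\dots,k-1\}$, the column indices $\Pr_n(i+j-1)$ split into a non-wrapping block ($i\le k-t$) and a wrapping block ($i\ge k-t+1$); the key point I would verify is that in the first block the factor $\binom{k-i-t}{k-i}$ vanishes and in the second block the factor $\binom{i-k+t-1}{i-1}$ vanishes, because in each case a binomial has a nonnegative ``top'' strictly smaller than its ``bottom,'' giving $D_j=0$.

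The main obstacle is precisely this cyclic condition (iii): the no-wraparound range hinges on recognizing and correctly instantiating the convolution identity above, while the wraparound range requires careful bookkeeping of the modular shift $\Pr_n$ to confirm that every summand is forced to be zero. Once (i)--(iii) are established for the matrix $A$ displayed above, Theorem~\ref{thm:cycle} yields the $\Q$-conjugacy of $O_n^k$ and $O_n^l$, that is, of $O_n^m$ and $O_n^{n-m+1}$.
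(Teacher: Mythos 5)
Your proposal is correct and takes essentially the same route as the paper: after the same reduction to Theorem~\ref{thm:cycle} with $k=\min(m,n-m+1)$ and $l=\max(m,n-m+1)$, your candidate $A_{i,j}=\binom{j-1}{i-1}\binom{n-j}{k-i}\big/\binom{n-1}{k-1}$ is \emph{identical} to the paper's matrix $\binom{k-1}{i-1}(j-1)\LP{i-1}(n-j)\LP{k-i}\big/(n-1)\LP{k-1}$ (just written with binomial coefficients instead of falling factorials), and your Vandermonde-type convolution identities for (i), (ii), and the non-wrapping part of (iii) are the same computations the paper carries out via the binomial theorem for Pochhammer symbols. The only minor divergence is the wraparound case $j\in\N_n\setminus\N_l$ of (iii), where you verify directly that every binomial summand vanishes, whereas the paper deduces the vanishing from nonnegativity combined with the total-sum bookkeeping $\sum_{j=1}^{n}\sum_{i=1}^{k}A_{i,j}=n$; both arguments are sound.
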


\begin{proof} Let $k:=\min(m,n-m+1)$ and $l:=\max(m,n-m+1)$. Then, obviously, we have that $k\leq l\leq n=k+l-1$. It is now sufficient to show that $O_n^k$ and $O_n^l$ are $\Q$-conjugated, i.e. construct a matrix $A=(A_{i,j})_{(i,j) \in \N_k \times \N_n}$ which satisfies conditions (i), (ii), and (iii) of Theorem~\ref{thm:cycle}. 

For the costruction of the matrix $A$, we have to recall the notion of Pochhammer symbols and their basic properties. For $x\in\R$ and $\alpha\in\N\cup\{0\}$ set
\Eq{*}{
  x\LP{\alpha}:=\prod_{j=0}^{\alpha-1}(x-j) \qquad\mbox{and}\qquad
  x\UP{\alpha}:=\prod_{j=0}^{\alpha-1}(x+j).
}
They are binded by the following easy identities $x\LP{\alpha}=(x-\alpha+1)\UP{\alpha}$ and $x\UP{\alpha}=(x+\alpha-1)\LP{\alpha}$.
If $x\in \N$ we also have $x\LP{\alpha}=\frac{x!}{(x-\alpha)!}$ ($\alpha \in \{0,\dots,x\}$) and $x\UP{\alpha}=\tfrac{(x+\alpha-1)!}{(x-1)!}$ ($\alpha \in \N$).
Furthermore, the following variants of the binomial theorem are well-known:
\Eq{BT}{
 (x+y)\LP{\alpha}=\sum_{s=0}^\alpha\binom{\alpha}{s}x\LP{s}y\LP{\alpha-s}
  \qquad\mbox{and}\qquad
 (x+y)\UP{\alpha}=\sum_{s=0}^\alpha\binom{\alpha}{s}x\UP{s}y\UP{\alpha-s}.
}

Now, we are in the position to define the matrix $A$: Let
\Eq{*}{
  A_{i,j}:=\binom{k-1}{i-1} \frac{(j-1)\LP{i-1}(n-j)\LP{k-i}}{(n-1)\LP{k-1}}.
}
Trivially, all the entries of $A$ are nonnegative.
To show that condition (i) of Theorem~\ref{thm:cycle}, we use the first formula from \eq{BT}. Then, for $j\in\N_n$, we have
\Eq{*}{
\sum_{i=1}^{k}A_{i,j}
=\sum_{i=1}^k \binom{k-1}{i-1} \frac{(j-1)\LP{i-1}(n-j)\LP{k-i}}{(n-1)\LP{k-1}} 
=\frac{(n-1)\LP{k-1}}{(n-1)\LP{k-1}}=1.
}
For $i\in\N_k$, we have
\Eq{*}{
\sum_{j=1}^{n}A_{i,j}
&=\sum_{j=1}^{n} \binom{k-1}{i-1} \frac{(j-1)\LP{i-1}(n-j)\LP{k-i}}{(n-1)\LP{k-1}}
=\sum_{j=i}^{n-k+i} \binom{k-1}{i-1} \frac{(j-1)\LP{i-1}(n-j)\LP{k-i}}{(n-1)\LP{k-1}}\\ 
&=\sum_{\alpha=0}^{n-k} \binom{k-1}{i-1} \frac{(\alpha+i-1)\LP{i-1}(n-\alpha-i)\LP{k-i}}{(n-1)\LP{k-1}}\\ 
&=\sum_{\alpha=0}^{n-k}\frac{(k-1)!}{(k-i)!(i-1)!} \cdot \frac{(n-k)!(\alpha+i-1)!(n-i-\alpha)!}{(n-1)!\alpha!(n-k-\alpha)!} \\
&=\frac{(k-1)!}{(n-1)!}\sum_{\alpha=0}^{n-k} \frac{(n-k)!}{\alpha!(n-k-\alpha)!}\cdot\frac{(\alpha+i-1)!(n-i-\alpha)!}{(i-1)!(k-i)!} \\
&=\frac{(k-1)!}{(n-1)!}\sum_{\alpha=0}^{n-k} \binom{n-k}\alpha i\UP{\alpha} (k-i+1)\UP{n-k-\alpha} \\
&=\frac{(k-1)!}{(n-1)!}(k+1)\UP{n-k} =\frac{k! (k+1)\UP{n-k}}{k (n-1)!}=\frac{n!}{k (n-1)!}=\frac nk.
}

Finally, observe that if $i\in\N_k$ and $j\in\N_{n-k+1}$, then, we have
$i+j-1\leq n$, and hence $\Pr_n(i+j-1)=i+j-1$. For an arbitrary $j\in\N_{n-k+1}$ we calculate
\Eq{*}{
\sum_{i=1}^{k} A_{i,\Pr_n(i+j-1)}
&=\sum_{i=0}^{k-1} A_{i+1,i+j}
=\sum_{i=0}^{k-1} \binom{k-1}{i} \frac{(i+j-1)\LP{i}(n-i-j)\LP{k-i-1}}{(n-1)\LP{k-1}}\\
&=\sum_{i=0}^{k-1} \binom{k-1}{i} \frac{j\UP{i}(n-j-k+2)\UP{k-i-1}}{(n-k+1)\UP{k-1}}
=\frac{(n-k+2)\UP{k-1}}{(n-k+1)\UP{k-1}}=\frac{n}{n-k+1}. 
}

To conclude the proof observe that, as all entries of $A$ are nonnegative and
\Eq{*}{
\sum_{ j=n-k+2}^n\sum_{i=1}^{k} A_{i,\Pr_n(i+j-1)}=\sum_{j=1}^n \sum_{i=1}^k  A_{i,j} - \sum_{j=1}^{n-k+1}
\sum_{i=1}^{k} A_{i,\Pr_n(i+j-1)}=n-\frac{(n-k+1)n}{n-k+1}=0,
}
we obtain $A_{i,\Pr_n(i+j-1)}=0$ for all $i \in \N_k$ and $j \in \N_n \setminus \N_{n-k+1}$. In fact it is a particular case of a more general rule which emerges directly from (iii) restricted to $j \in \N_l$ and (i).
\end{proof}

This theorem in the classical arithmetic-geometric setting yields the inequality
\Eq{*}{
&\frac{\sqrt[n-m+1]{x_1x_2\cdots x_{n-m+1}}+\sqrt[n-m+1]{x_2x_3\cdots x_{n-m+1}}+\cdots+\sqrt[n-m+1]{x_nx_1\cdots x_{n-m}}}n\\\
&\qquad \le \sqrt[n]{\frac{x_1+\cdots+x_m}m \cdot \frac{x_2+\cdots+x_{m+1}}m\cdots\frac{x_n+x_1+\cdots+x_{m-1}}m},
}
which is valid for $n \ge 2$, $m \le n$ and $x=(x_1,\dots,x_n) \in \R_+^n$. Remarkably, in the boundary case 
$m=n$ it reduces to the comparability of arithmetic and geometric mean. For $n=m-1$ it is just the Leng--Si--Zhu inequality.

Obviously arithmetic and geometic means in the inequality above can be replaced by power means or (even more generally) by any Ingham-Jessen pair of means.

%\bibliography{publ,funcequ,newbib}
%\bibliographystyle{plain}

\end{document}